\documentclass{article}
\usepackage{geometry}
\usepackage{graphicx,comment,amsmath,amssymb,amsthm,color,caption,subcaption,url,hyperref,enumerate,lineno,mathrsfs,mathtools,authblk,setspace}

\DeclareMathOperator{\Grad}{grad}
\newcommand{\trace}{{\mathrm{Trace}}}
\newcommand{\arc}{{\mathrm{Arc}}}
\newcommand{\ct}[3]{\mathscr{T}_{\mathbf{#1}}^\mathbf{#2}\left(#3\right)}

\newtheorem{lemma}{Lemma}
\newtheorem{definition}{Definition}
\newtheorem{theorem}{Theorem}
\newtheorem{corollary}{Corollary}

\newtheorem{example}{Example}
\newtheorem{remark}{Remark}

\title{Semidefinite Programming Certificates for Synchronization of Kuramoto Oscillators on Arcs\thanks{\small
Presented at the International Interdisciplinary Chaos Symposium on Chaos and Complex Systems (SCCS 2025), Istanbul, Türkiye. A version of this work has been accepted for publication in the conference proceedings and will appear in \textit{\href{https://link.springer.com/book/9783032091000}{Chaos and Complex Systems: Proceedings of the 6th International Interdisciplinary Chaos Symposium}} (Springer Cham).
}}
\author[1]{Swapnil Tripathi}
\author[2]{Mahmut Kudeyt}
\author[3]{Alkım G\"{o}k\c{c}en}
\author[3]{Sava\c{s} \c{S}ahin}
\author[1,*]{\"{O}zkan Karabacak}
\affil[1]{Kadir Has University, Department of Mechatronics Engineering, Istanbul, T\"{u}rkiye}
\affil[2]{I\c{s}ık University, Department of Mathematics, Istanbul, T\"{u}rkiye}
\affil[3]{Izmir Katip \c{C}elebi University, Department of Electrical and Electronic Engineering, Izmir, T\"{u}rkiye}
\affil[*]{Corresponding author}
\date{}

\begin{document}

\maketitle

\begin{abstract}
A class of Kuramoto models with a general coupling function that can be expressed in terms of a finite number of harmonics, each comprising sinusoidal terms, is studied. We propose a novel approach for certifying local phase synchronization in this class for all initial conditions lying on an arc. The trace parametrization property and Gram matrix representation of a trigonometric polynomial are utilized along with Putinar's Positivstellensatz to obtain semidefinite programming certificates for the stability of the phase-difference system, which in turn implies synchronization of the original system. The results can be extended to any system of coupled oscillators where the forward-invariance on arcs can be established.
\end{abstract}
\vspace{10pt}
\noindent \textbf{Keywords: }{semidefinite programming, phase synchronization, Kuramoto models, trigonometric polynomials.}

\section{Introduction}\label{sec:intro}

In 1665, Huygens observed that two pendulum clocks, weakly coupled through a heavy beam, eventually swung in opposite directions~\cite{willms2017huygens}. This phenomenon, now termed anti-phase synchronization, was the primary motivating factor for studying various synchronization issues in oscillators, such as phase synchronization and phase locking, among several others~\cite{arkady}. The fact that various objects in nature tend to seek harmony, a characteristic of synchronization, has only further fueled the research. This phenomenon can be observed in biological systems, such as groups of synchronously flashing fireflies~\cite{buck1988synchronous}, photosensitive neuron networks~\cite{hussain2021synchronization}, and the synchronization of neurons in memory processes~\cite{fell2011role}; and is a major feature studied in
superconducting Josephson junction~\cite{wiesenfeld1998frequency}, power grids~\cite{filatrella2008analysis} and cardiac conduction system~\cite{baleanu2021hyperchaotic}. An excellent survey on synchronization in oscillatory systems and complex networks can be found in~\cite{dorfler2014synchronization}. 

One of the most widely studied theoretical frameworks for understanding such collective behavior is the Kuramoto model.
In its classical form, the Kuramoto model comprises a set of phase oscillators coupled through a sinusoidal function~\cite{kuramoto_first}. Despite its simplicity, the models exhibit various properties, including phase/frequency synchronization~\cite{chopraspong}, and phase/frequency locking~\cite{wiesenfeld1998frequency}. Since then, Kuramoto models have been extended in a number of ways to account for more realistic features such as 
interconnecting topology~\cite{canale2008almost}, and general coupling functions~\cite{kudeyt2023certification}, among several others.

The following work explores Kuramoto models with a general coupling function that can be expressed in terms of a finite number of harmonics, each comprising sinusoidal terms. Deriving inspiration from synchronization on open half-circles for sinusoidal Kuramoto models~\cite{chopraspong,ha2010complete}, we will obtain certificates for local phase synchronization (on arcs) of the generalized Kuramoto model. 
Using the local LaSalle's invariance principle, the certificates are obtained as a semidefinite programming problem that can be solved using any modern convex program solver, for instance, our program \texttt{arcSOS-t}~\cite{Tripathi_Software_arcSOS-t-Solver_2025}. The program constructs a trigonometric polynomial that satisfies Lyapunov's time derivative condition on arcs and is inspired by our recent work on hypertoral systems~\cite{preprint_paper}. Subsequently, a domain of attraction is obtained using this construction and the invariance of the generalized Kuramoto models on certain arcs.

The paper is organized as follows. In Sect.~\ref{sec:notation}, we introduce some notations and definitions that will be used throughout the paper. In Sect.~\ref{sec:Kura_gen_coup}, we define the generalized Kuramoto model
. We reduce the dimension of this system by using phase-shift symmetry and introducing phase-difference variables in Sect.~\ref{sec:phase-difference}. We provide a consequence of LaSalle's invariance as Lemma~\ref{cor:Lasalles_cor_conf} for establishing local asymptotic stability of the phase-difference system on compact sets. In Sect.~\ref{sec:arc_invariance}, we define open/ closed arcs and obtain Theorem~\ref{thm:inv_arc} to establish forward-invariance of the model on arcs of certain lengths. This implies the forward-invariance of the phase-difference system on quotiented arcs. In Sect.~\ref{sec:SDP_cert}, we obtain an SDP certificate for the stability of the phase-difference system on quotiented arcs of certain lengths in Theorem~\ref{thm:local_stab_arcs_main}, implying local phase synchronization of the main model. We present some examples in Sect.~\ref{sec:example} and concluding remarks in Sect.~\ref{sec:conc}.

\section{Notations and Definitions}\label{sec:notation}

We will denote $\boldsymbol{0}$ (respectively, $\boldsymbol{1}$) as the vector of zeroes (respectively, ones), where the dimension will be clear with the context. The unit circle is denoted by $\mathbb{T}\vcentcolon=[0,2\pi)$ and is equipped with arc length metric, meaning ${\lvert\theta_1-\theta_2\rvert}$ is the length of the shortest arc joining $\theta_1$ and $\theta_2$, with a slight abuse of notation. For $\mathbf{k},\mathbf{n}\in\mathbb{Z}^d$, $\max\{\mathbf{k},\mathbf{n}\}=(\max\{k_1,n_1\},\ldots,\max\{k_d,n_d\})$; and the inequality $\mathbf{k}\le\mathbf{n}$ (respectively, ${\lvert\mathbf{k}\rvert\le\mathbf{n}}$) means $k_j\le n_j$ (respectively, $\lvert k_j\rvert\le n_j$) for all $j=1,\ldots,d$. The bar notation $\overline{z}$ denotes the complex conjugate if $z\in\mathbb{C}$ and the topological closure if $z$ is a set. 

A real-valued function is said to be a \textit{positive definite function} if it has a zero at the origin, and is positive elsewhere in the domain. A matrix $A$ is \textit{zero-sum} if all its entries sum up to zero. \textit{Nullity} is the dimension of the null space of a matrix. The shorthand $A\ge 0$ is used to denote that $A$ is positive semidefinite.

\section{Kuramoto Model with General Coupling}\label{sec:Kura_gen_coup}

Consider a class of generalized Kuramoto models of $d$ all-to-all coupled phase oscillators given by

\begin{equation}\label{eq:system_conf}
	\dot{\theta}_k=F_k(\boldsymbol{\theta})\vcentcolon=\omega+K \sum_{c=1}^d \left(\sum_{l=1}^L\alpha_l\sin{\left(l\left(\theta_c-\theta_k\right)+\beta_l\right)}\right), \quad k\in\{1,\dots, d\},   
\end{equation}
where $\boldsymbol\theta=(\theta_1,\theta_2,\dots,\theta_d)^\top\in \mathbb{T}^d$; $\omega$ is the natural frequency common to all phase oscillators; where $\alpha_l>0$ and $\lvert\beta_l\rvert<\pi/2$ for all $l=1,\ldots,L$. The model~\eqref{eq:system_conf} is a system on the hypertorus, $\mathbb{T}^d$, given by $\dot{\boldsymbol\theta}=\boldsymbol{F}(\boldsymbol{\theta})\vcentcolon=\left(F_1(\boldsymbol{\theta}),\ldots,F_d(\boldsymbol{\theta})\right)^\top$. The system~\eqref{eq:system_conf} exhibits \textit{local phase synchronization} in $\mathfrak{I}\subsetneq \mathbb{T}^d$ if for all $\boldsymbol{\theta}(0)\in\mathfrak{I}$, $\lim_{t\to\infty}\left\lvert\theta_i(t)-\theta_j(t)\right\rvert=0$ for all $i,j\in\{1,\ldots, d\}$. In this study, we will obtain semidefinite programming certificates for local phase synchronization of~\eqref{eq:system_conf}. 

\subsection{Phase-Difference System and LaSalle's Invariance Principle}\label{sec:phase-difference}

Due to the phase-shift symmetry of~\eqref{eq:system_conf}, which is the invariance of the system under transformation ${\boldsymbol{\theta}\mapsto \boldsymbol{\theta}+\epsilon\cdot \boldsymbol{1}}$, the dimension of the system can be reduced by $1$ to obtain a phase-difference system~\cite{kudeyt2023certification}. For instance, introducing phase-difference variables $\varphi_c=\theta_c-\theta_d$ for $c=1,\ldots,d-1$, we obtain a phase-difference system $\dot{\boldsymbol{\varphi}}=(\tilde{F}_1(\boldsymbol{\varphi}),\ldots,\tilde{F}_{d-1}(\boldsymbol{\varphi}))^\top\vcentcolon=\widetilde{\boldsymbol{F}}(\boldsymbol{\varphi})$, where $\boldsymbol{\varphi}=(\varphi_1,\ldots,\varphi_{d-1})\in\mathbb{T}^{d-1}$. We say that the phase-difference system is \textit{locally stable} in $\widetilde{\mathfrak{I}}\subsetneq\mathbb{T}^{d-1}$ if $\lim_{t\to\infty}\boldsymbol\varphi(t)=\boldsymbol{0}$ for all $\boldsymbol\varphi(0)\in \widetilde{\mathfrak{I}}$.

\begin{remark}\label{rem:dimension reduction}
	Consider the relation $\sim$ on $\mathbb{T}^d$ defined as $\boldsymbol{\theta}\sim \boldsymbol{\psi}$ if and only if $\boldsymbol{\theta}-\boldsymbol{\psi}=\varepsilon\cdot\boldsymbol{1}$ for some $\varepsilon\in\mathbb{R}$. Then $\sim$ is an equivalence relation. Note that the equivalence classes satisfy $[\boldsymbol{\theta}]=[\boldsymbol{\theta}-\theta_d\cdot \boldsymbol{1}]$, thus each class has a unique representation with last entry $0$. Thus, there is a canonical projection $\Phi_1\colon\mathbb{T}^d\to\left(\mathbb{T}^d/\sim\right)$ given by $\boldsymbol{\theta}\mapsto [\boldsymbol{\theta}-\theta_d\cdot \boldsymbol{1}]$, and an isomorphism $\Phi_2\colon \left(\mathbb{T}^d/\sim\right)\to \mathbb{T}^{d-1}$ given by $ [\boldsymbol{\theta}-\theta_d\cdot \boldsymbol{1}]\mapsto\left(\theta_1-\theta_d,\ldots,\theta_{d-1}-\theta_d\right)^\top$ . Denote $\left(\mathfrak{I}/\sim\right)=\Phi_2\circ \Phi_1 (\mathfrak{I})$.
\end{remark}

The local phase synchronization of~\eqref{eq:system_conf} in $\mathfrak{I}$ can be established by certifying the local stability of the phase-difference system in $\left(\mathfrak{I}/\sim\right)$, which can be established using the following consequence of LaSalle's Theorem~\cite[Theorem 4.4]{khalil2002nonlinear}.

\begin{lemma}\label{cor:Lasalles_cor_conf}
	If $\mathfrak{K}_{inv}\subseteq \mathbb{T}^{d-1}$ is a compact set that is positively invariant with respect to $\dot{\boldsymbol\varphi}=\widetilde{\boldsymbol{F}}(\boldsymbol{\varphi})$ and contains the origin. Let $V\colon \mathbb{T}^{d-1}\to \mathbb{R}$ be continuously differentiable function; such that $-\Grad V (\boldsymbol{\varphi})\cdot \widetilde{\boldsymbol{F}}(\boldsymbol{\varphi})$ is positive definite in $\mathfrak{K}_{inv}$; then every solution starting in $\mathfrak{K}_{inv}$ approaches to the origin.
\end{lemma}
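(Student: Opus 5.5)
We apply LaSalle's invariance principle \cite[Theorem 4.4]{khalil2002nonlinear} directly, with $\Omega=\mathfrak{K}_{inv}$ as the compact positively invariant set. Write $\dot V(\boldsymbol{\varphi})\vcentcolon=\Grad V(\boldsymbol{\varphi})\cdot\widetilde{\boldsymbol{F}}(\boldsymbol{\varphi})$. By hypothesis $-\dot V$ is positive definite on $\mathfrak{K}_{inv}$, so $\dot V\le 0$ there, which is exactly the hypothesis of LaSalle's theorem. LaSalle's theorem applies on $\mathbb{T}^{d-1}$ because it only uses compactness of $\Omega$, continuity of $V$ and $\dot V$, and local Lipschitz continuity of $\widetilde{\boldsymbol{F}}$. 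All of these hold: $\widetilde{\boldsymbol{F}}$ is a trigonometric polynomial in $\boldsymbol{\varphi}$, hence smooth on the torus.

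The conclusion is that every solution starting in $\mathfrak{K}_{inv}$ approaches $M$, the largest invariant set contained in
\[
E=\{\boldsymbol{\varphi}\in\mathfrak{K}_{inv}\colon \dot V(\boldsymbol{\varphi})=0\}.
\]
Positive definiteness of $-\dot V$ on $\mathfrak{K}_{inv}$ means it vanishes at the origin and is strictly positive at every other point of $\mathfrak{K}_{inv}$. Since $\mathfrak{K}_{inv}$ contains the origin, $E=\{\boldsymbol{0}\}$.

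Next we check that $\{\boldsymbol{0}\}$ is invariant, so that $M=\{\boldsymbol{0}\}$ rather than $M=\emptyset$. From \eqref{eq:system_conf}, at full synchrony every $F_k$ equals $\omega+Kd\sum_l\alpha_l\sin\beta_l$, so all the differences $\dot\varphi_c=F_c-F_d$ vanish. Hence $\widetilde{\boldsymbol{F}}(\boldsymbol{0})=\boldsymbol{0}$. Alternatively, $M$ is nonempty because the $\omega$-limit set of a bounded trajectory in a compact set is nonempty and invariant. Either way, $M=\{\boldsymbol{0}\}$.

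Convergence to the single-point set $\{\boldsymbol{0}\}$ in the arc-length metric is the same as $\boldsymbol{\varphi}(t)\to\boldsymbol{0}$, which proves the lemma. The only delicate point is that LaSalle is usually stated on $\mathbb{R}^n$. Its proof uses only sequential compactness of $\Omega$ and continuity of the flow, so it transfers verbatim to the compact manifold $\mathbb{T}^{d-1}$. If needed, one can instead lift $\mathfrak{K}_{inv}$ locally to $\mathbb{R}^{d-1}$, since trajectories never leave $\mathfrak{K}_{inv}$. This transfer is the step that needs a remark, but it poses no real obstacle.
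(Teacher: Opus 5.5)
Your proof is correct and takes the same route as the paper: apply LaSalle's theorem with $\Omega=\mathfrak{K}_{inv}$ and observe that $\{\boldsymbol{\varphi}\in\mathfrak{K}_{inv}\colon \dot V(\boldsymbol{\varphi})=0\}=\{\boldsymbol{0}\}$. You also spell out two points that the paper's one-line proof leaves implicit: that $\widetilde{\boldsymbol{F}}(\boldsymbol{0})=\boldsymbol{0}$, so $M=\{\boldsymbol{0}\}$ is nonempty, and that LaSalle's argument carries over to the compact torus.
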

\begin{proof}
	Since $\dot{V}(\boldsymbol{\varphi})=\Grad V (\boldsymbol{\varphi})\cdot \widetilde{\boldsymbol{F}}(\boldsymbol{\varphi})$ is negative definite  in the invariant set $\mathfrak{K}_{inv}$, $\{\boldsymbol{\varphi}\in\mathfrak{K}_{inv}\colon\ \dot{V}(\boldsymbol{\varphi})=\boldsymbol{0}\}=\{\boldsymbol{0}\}$, hence the result.
\end{proof}

For systems on a hypertorus, the construction of a Lyapunov-like function $V$ can be achieved using trigonometric polynomials~\cite{preprint_paper}, which will be briefly touched upon in Sect.~\ref{sec:SDP_cert}. Also, due to the structure of the generalized Kuramoto models~\eqref{eq:system_conf} considered in this study, invariant sets can be easily obtained. This is elaborated in the following section.

\section{Invariance of the Model on Arcs}\label{sec:arc_invariance}
\begin{definition}[Arc in $\mathbb{T}^d$,~\cite{dorfler2011critical}]
	An open arc of length $a<\pi$ in $\mathbb{T}^d$ is defined as the set of all points $\boldsymbol{\theta}\in\mathbb{T}^d$ whose components lie on a single open arc of length $a$ in $\mathbb{T}$, as depicted in Fig.~\ref{fig:arc_projection}, and is denoted by\begin{eqnarray*}
		\arc_d(a)&=&\{\boldsymbol{\theta}\in\mathbb{T}^d\colon \lvert\theta_i-\theta_j\rvert<a\ \text{ for all }i,j=1,\ldots,d\}.
	\end{eqnarray*}
	Define a closed arc of length $a<\pi$ in $\mathbb{T}^d$ as its topological closure $\overline{\arc_d(a)}$.
	\begin{figure}[h!]
		\includegraphics[scale=0.5]{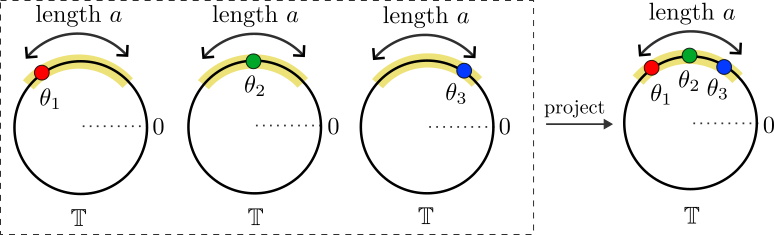}
		\centering
		\caption{The point $\boldsymbol{\theta}$ is in $Arc_d(a)$ if all components $\theta_j$ can be projected within an arc of length $a$ in $\mathbb{T}$.}\label{fig:arc_projection}
	\end{figure}
\end{definition}

\begin{theorem}\label{thm:inv_arc}
	Any arc $\overline{\arc_d(a)}$ with $a< \frac{2}{L}\left(\frac{\pi}{2}-\max_{l}\lvert\beta_l\rvert\right)$ is forward-invariant with respect to~\eqref{eq:system_conf}. Consequently, $\arc_d\left(\frac{\pi}{2}-\max_{l}\lvert\beta_l\rvert\right)$ is forward-invariant.
\end{theorem}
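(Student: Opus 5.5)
The plan is the standard ``extremal oscillators'' argument used for the sinusoidal Kuramoto model on open half-circles. First I make the problem one-dimensional. Fix $\boldsymbol\theta(0)\in\overline{\arc_d(a)}$ with $a<\frac{2}{L}\left(\frac{\pi}{2}-\max_l\lvert\beta_l\rvert\right)\le\pi$. As long as the trajectory stays in an arc of length less than $\pi$, I can lift the components to real numbers lying in an interval of length at most $a$. Then I define the spread
\[
D(t)=\max_k\theta_k(t)-\min_k\theta_k(t).
\]
Forward invariance of $\overline{\arc_d(a)}$ follows once I show that $D$ cannot increase while $D\le a$. This function is only Lipschitz, so I will use its upper Dini derivative. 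By Danskin's lemma,
\[
D^+D(t)=\max_{k\in M(t)}\dot\theta_k-\min_{j\in m(t)}\dot\theta_j,
\]
where $M(t)$ and $m(t)$ are the sets of indices attaining the maximum and the minimum.

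Next I bound the velocities of the extremal oscillators. Take $k\in M(t)$ and write $x_c=\theta_c-\theta_k\in[-D,0]$. Then $\dot\theta_k-\omega=K\sum_c\sum_l\alpha_l\sin(l x_c+\beta_l)$. Since $D\le a$, each argument satisfies
\[
l x_c+\beta_l\in[-lD+\beta_l,\beta_l].
\]
For $j\in m(t)$ and $y_c=\theta_c-\theta_j\in[0,D]$, the argument $l y_c+\beta_l$ lies in $[\beta_l,\beta_l+lD]$. The key pairing is to compare, for each $c$ and $l$, the term $\sin(l x_c+\beta_l)$ for the leading oscillator with $\sin(l y_c+\beta_l)$ for the trailing one. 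I want to show
\[
\sum_c\sum_l\alpha_l\Big[\sin(l(\theta_c-\theta_k)+\beta_l)-\sin(l(\theta_c-\theta_j)+\beta_l)\Big]\le 0,
\]
with strict inequality unless all $\theta_c$ coincide.

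I would do this by using the sum-to-difference formula on each pair:
\[
\sin A-\sin B=2\cos\!\left(\tfrac{A+B}{2}\right)\sin\!\left(\tfrac{A-B}{2}\right).
\]
Here $A-B=l(\theta_j-\theta_k)=-lD$, so $\sin\frac{A-B}{2}=-\sin\frac{lD}{2}$, which is negative because $lD/2\le LD/2<\pi/2$. The midpoint is
\[
\frac{A+B}{2}=l\left(\theta_c-\tfrac{\theta_k+\theta_j}{2}\right)+\beta_l,
\]
and $\theta_c-\frac{\theta_k+\theta_j}{2}\in[-D/2,D/2]$, so the midpoint lies in $[\beta_l-lD/2,\beta_l+lD/2]$. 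Its absolute value is therefore at most $\lvert\beta_l\rvert+LD/2<\pi/2$ exactly under the hypothesis $D\le a<\frac{2}{L}(\frac{\pi}{2}-\max_l\lvert\beta_l\rvert)$, which gives cosine positive. Each summand is therefore strictly negative when $D>0$, and since $\alpha_l,K>0$ we get $D^+D<0$ whenever $D>0$. When $D=0$, the synchronized state is invariant because all $F_k$ coincide there. This is where I expect the main care is needed, and it is also precisely where the constant $\frac{2}{L}(\cdot)$ comes from.

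To conclude, I use the standard comparison result: a Lipschitz function with $D^+D\le0$ on $\{D\le a\}$ is nonincreasing. A continuity argument on the first exit time then rules out $D(t)>a$, so the closed arc is forward-invariant. The lifting to reals stays valid because $D$ never grows.

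For the ``consequently'' part, I would apply the result with $L=1$; otherwise I note that $\frac{2}{L}\ge$ the needed ratio only when $L\le2$, so I would read the statement as exhausting the open arc $\arc_d(b)$ by closed arcs $\overline{\arc_d(a)}$ with $a<b$. Any trajectory starting in the open arc starts in some such closed arc and stays there, hence stays in the open arc. If the constant as written requires $L\le 2$, I would flag this and state the corollary with $b=\frac{2}{L}(\frac{\pi}{2}-\max_l\lvert\beta_l\rvert)$.
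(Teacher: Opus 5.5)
Your proof is correct and is essentially the paper's argument. Both use the same sum-to-difference pairing of the leading and trailing oscillators and the same midpoint bound $lD/2+\lvert\beta_l\rvert<\pi/2$. The only difference is bookkeeping: you handle the extremes through the Dini derivative of $\max_k\theta_k-\min_k\theta_k$ plus an explicit first-exit argument, whereas the paper uses invariance of the collision sets $\mathcal{T}_{ij}$ to freeze the extremal indices $m,M$, so that $D$ is smooth.

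Your flag on the second sentence is justified. The paper's proof never addresses it, and it is false for $L\ge 3$: for instance, $d=2$, $\beta_l=0$, $g(x)=\sin x+\sin 2x+2\sin 3x$ gives $\dot D=-2Kg(D)$ with $g(\pi/2)=-1$, so trajectories starting just inside $\arc_2(\pi/2)$ leave it. Your restatement with $b=\frac{2}{L}\left(\frac{\pi}{2}-\max_l\lvert\beta_l\rvert\right)$ is therefore the right fix. Just drop the phrase about applying the result ``with $L=1$'', since $L$ is fixed by the coupling function.
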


\begin{proof}
	First note that the sets $\mathcal{T}_{ij}=\{\boldsymbol{\theta}\in\mathbb{T}^d\colon \theta_i=\theta_j\}$ are invariant with respect to system~\eqref{eq:system_conf} since $\boldsymbol{F}(\boldsymbol{\theta})\in\mathcal{T}_{ij}$ for any $\boldsymbol\theta\in\mathcal{T}_{ij}$. Thus given a pair $(i,j)$ and a solution $\boldsymbol{\theta}(t)$, either $\theta_i(t)$ and $\theta_j(t)$ do not collide or they are identically equal. Let $a<\frac{2}{L}\left(\frac{\pi}{2}-\max_{l}\lvert\beta_l\rvert\right)$. We claim that the set $\overline{\mathrm{Arc}_d\left(a\right)}$ is forward-invariant under the flow. Let $\boldsymbol{\theta}(0)\in\overline{\mathrm{Arc}_d\left(a\right)}$, then $\theta_c(0)$, $c=1,\ldots,d$, can be plotted on a closed arc of length $a<\pi$. Choose $m, M \in\{1,\ldots,d\}$ to be indices of $\theta_c(0)$, occurring first and last (may not be unique), respectively, when we travel counter-clockwise on the arc. For instance, in Fig.~\ref{fig:arc_projection}, $d=3$, $m=3$ and $M=1$. Due to invariance of $\mathcal{T}_{ij}$, each $\theta_c(t)$ lies between $\theta_m(t)$ and $\theta_M(t)$ for all $t\ge 0$. Set the convention, $\textup{D}(\boldsymbol{\theta}(t))\vcentcolon=\theta_{M}(t)-\theta_{m}(t)>0$. Then we have  \begin{eqnarray*}
		\nonumber \dot{\textup{D}}(\boldsymbol{\theta}(t))&=& K \sum_{c=1}^d\sum_{l=1}^L\alpha_l\,\biggl[\sin{\left(l\left(\theta_c(t)-\theta_M(t)\right)+\beta_l\right)}-\sin{\left(l\left(\theta_c(t)-\theta_m(t)\right)+\beta_l\right)}\biggr]\\
		&=&\nonumber-2 K \sum_{l=1}^L\alpha_l\Biggl[\sin\biggl(l\,\left(\theta_{M}(t)-\theta_{m}(t)\right)\biggr)\,\cos(\beta_l) + \sin\left(l\,\frac{\theta_{M}(t)-\theta_{m}(t)}{2}\right)\Biggr.\\
		& &\hspace{80pt}\Biggl.\times \sum_{c\notin \{m,M\}} \,\cos\left(l\left(\theta_c(t)-\frac{\theta_{M}(t)+\theta_{m}(t)}{2}\right)+\beta_l\right) 
		\Biggr].
	\end{eqnarray*}
	If $\boldsymbol{\theta}(t)\in\overline{\mathrm{Arc}_d\left(a\right)}$, then $l\,\left(\theta_{M}(t)-\theta_{m}(t)\right)\le La<\pi$; $\lvert\beta_l\rvert<\pi/2$; and \begin{eqnarray*}
		\nonumber
		\left\lvert l\left(\theta_c(t)-\frac{\theta_{M}(t)+\theta_{m}(t)}{2}\right)+\beta_l\right\rvert&\le& L\left(\left\lvert \frac{\theta_c(t)-\theta_{M}(t)}{2}\right\rvert+\left\lvert\frac{\theta_c(t)-\theta_{m}(t)}{2}\right\rvert\right)+\max_l \lvert\beta_l\rvert\\
		\nonumber &=& L\left( \frac{\theta_{M}(t)-\theta_c(t)}{2}+\frac{\theta_c(t)-\theta_{m}(t)}{2}\right)+\max_l \lvert\beta_l\rvert\\
		&\le& L\frac{a}{2}+\max_l \lvert\beta_l\rvert< \frac{\pi}{2}.
	\end{eqnarray*}
	We conclude $\theta_{M}(t)-\theta_{m}(t)$ is nonincreasing  for all $t\ge 0$, thus $\boldsymbol{\theta}(t)\in\overline{\arc_d(a)}$.
\end{proof}

\begin{corollary}\label{cor:invariant_set_PD}
	For the phase-difference system $\dot{\boldsymbol{\varphi}}=\widetilde{\boldsymbol{F}}(\boldsymbol{\varphi})$ corresponding to system~\eqref{eq:system_conf}, the set $\left(\overline{\arc_d(a)}/\sim\right)$ is forward-invariant for all $a< \frac{2}{L}\left(\frac{\pi}{2}-\max_{l}\lvert\beta_l\rvert\right)$, where the notation $(\cdot/\sim)$ is from Remark~\ref{rem:dimension reduction}. 
\end{corollary}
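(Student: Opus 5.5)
The corollary follows from Theorem~\ref{thm:inv_arc}, because the phase-difference flow is the image of the original flow under the projection $\Phi\vcentcolon=\Phi_2\circ\Phi_1\colon\mathbb{T}^d\to\mathbb{T}^{d-1}$, $\boldsymbol{\theta}\mapsto(\theta_1-\theta_d,\ldots,\theta_{d-1}-\theta_d)^\top$. So the plan is to (i) show that $\Phi$ maps solutions of~\eqref{eq:system_conf} to solutions of $\dot{\boldsymbol{\varphi}}=\widetilde{\boldsymbol{F}}(\boldsymbol{\varphi})$, and (ii) show that every point of $\left(\overline{\arc_d(a)}/\sim\right)$ has a preimage in $\overline{\arc_d(a)}$. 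Forward invariance upstairs then passes to forward invariance downstairs.

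For (i), I first check that $\widetilde{\boldsymbol{F}}$ is well defined. Each $F_k(\boldsymbol{\theta})$ depends only on the differences $\theta_c-\theta_k$, so $\boldsymbol{F}(\boldsymbol{\theta}+\epsilon\boldsymbol{1})=\boldsymbol{F}(\boldsymbol{\theta})$. Hence $\tilde{F}_c(\boldsymbol{\varphi})\vcentcolon=F_c(\boldsymbol{\theta})-F_d(\boldsymbol{\theta})$ depends only on $\Phi(\boldsymbol{\theta})$; concretely, take $\boldsymbol{\theta}=(\boldsymbol{\varphi},0)$. Now if $\boldsymbol{\theta}(t)$ solves~\eqref{eq:system_conf}, then $\varphi_c(t)=\theta_c(t)-\theta_d(t)$ satisfies $\dot\varphi_c=F_c(\boldsymbol{\theta}(t))-F_d(\boldsymbol{\theta}(t))=\tilde F_c(\boldsymbol{\varphi}(t))$. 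Since $\widetilde{\boldsymbol{F}}$ is smooth, solutions are unique. Therefore the solution of the phase-difference system starting at $\boldsymbol{\varphi}_0$ equals $\Phi(\boldsymbol{\theta}(t))$ for the lift $\boldsymbol{\theta}(0)=(\boldsymbol{\varphi}_0,0)$.

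For (ii), take $\boldsymbol{\varphi}_0\in\left(\overline{\arc_d(a)}/\sim\right)$. Then $\boldsymbol{\varphi}_0=\Phi(\boldsymbol{\psi})$ for some $\boldsymbol{\psi}\in\overline{\arc_d(a)}$. The lift $\boldsymbol{\theta}(0)=(\boldsymbol{\varphi}_0,0)=\boldsymbol{\psi}-\psi_d\boldsymbol{1}$ is still in $\overline{\arc_d(a)}$, because the arc condition $\lvert\theta_i-\theta_j\rvert\le a$ is invariant under a common rotation. By Theorem~\ref{thm:inv_arc}, $\boldsymbol{\theta}(t)\in\overline{\arc_d(a)}$ for all $t\ge0$, hence $\boldsymbol{\varphi}(t)=\Phi(\boldsymbol{\theta}(t))\in\left(\overline{\arc_d(a)}/\sim\right)$.

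The only delicate point is bookkeeping, not analysis. One must make sure that membership in the quotient set is detected by one representative, and that the lift is chosen consistently with the uniqueness of solutions. Both follow from the phase-shift invariance of $\boldsymbol{F}$ and of $\overline{\arc_d(a)}$, which I would state explicitly.
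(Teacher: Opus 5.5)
Your proposal is correct and is the ``straightforward'' argument the paper omits. The phase-difference flow is the image of the flow of~\eqref{eq:system_conf} under $\Phi_2\circ\Phi_1$, by phase-shift invariance of $\boldsymbol{F}$ and uniqueness of solutions, so the forward invariance of $\overline{\arc_d(a)}$ from Theorem~\ref{thm:inv_arc} carries over to $\left(\overline{\arc_d(a)}/\sim\right)$. As a minor simplification, you need not re-lift to $(\boldsymbol{\varphi}_0,0)$: starting the original system at any preimage $\boldsymbol{\psi}\in\overline{\arc_d(a)}$ already works.
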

\begin{proof}
	The proof is straightforward and has been omitted. Moreover,
	\begin{eqnarray*}
		\left(\overline{\arc_d(a)}/\sim\right)=\{\boldsymbol{\varphi}\in\mathbb{T}^{d-1}\colon\ \ \lvert \varphi_i-\varphi_j\rvert\le a,\ \lvert\varphi_i\rvert \le a\ \ \ \forall\,i,j\}\subset \overline{\arc_{d-1}(a)}.
	\end{eqnarray*}
\end{proof}

\section{SDP Certificates for Local Stability of the Phase-Difference System}\label{sec:SDP_cert}

In this section, we briefly review the theory of trigonometric polynomials~\cite{dumitrescu2007positive}, which will be essential in obtaining local SDP certificates. A trigonometric polynomial $R(\boldsymbol{\varphi}):\mathbb{T}^{d-1}\to \mathbb{R}$ is defined by
\begin{equation*}
	R(\boldsymbol{\varphi})=\sum_{\mathbf{k}=-\mathbf{n_r}}^{\mathbf{n_r}}r_\mathbf{k}\, {\rm{e}}^{\mathrm{i}\,\mathbf{k}\cdot\boldsymbol{\varphi}}\vcentcolon=\sum_{k_1=-n_r(1)}^{n_r(1)}\ldots \sum_{k_{d-1}=-n_r(d-1)}^{n_r(d-1)}\,r_\mathbf{k}\, {\rm{e}}^{\mathrm{i}\,\mathbf{k}\cdot\boldsymbol{\varphi}},
\end{equation*}
where $\mathbf{n_r}=\left(n_r(1),n_r(2)\ldots,n_r(d-1)\right)\in \mathbb{Z}_{\ge 0}^{d-1}$ is the number of harmonics; $\mathbf{k}=(k_1,k_2,\dots,k_{d-1})\in\mathbb{Z}^{d-1}$ is an index satisfying $\lvert\mathbf{k}\rvert\le\mathbf{n_r}$; $r_\mathbf{k}$'s are complex Fourier coefficients of $R(\boldsymbol{\theta})$ satisfying $r^{}_{-\mathbf{k}}=\overline{r_\mathbf{k}}$; $\mathrm{i}$ is the imaginary unit; ${\mathbf{k}\cdot\boldsymbol\varphi=\sum_{c=1}^{d-1} k_c\varphi_c}$; and $\mathbf{n_r}\in \mathbb{Z}_{\ge 0}^d$ is taken as minimal, meaning $\mathbf{n_r}$ is the smallest index such that $r_{\mathbf{k}}= 0$ for all $\lvert\mathbf{k}\rvert> \mathbf{n_r}$, and called the degree of the trigonometric polynomial. 

Given a trigonometric polynomial $R(\boldsymbol\varphi)$, there exists a Hermitian matrix $G_R$ of size $\prod_{c=1}^{d-1} \left(n_r(c)+1\right)$ satisfying
\begin{equation}\label{eq: GramMatrix}
	R(\boldsymbol\varphi)=z(\boldsymbol\varphi)^\dagger\, G_R\, z(\boldsymbol\varphi),
\end{equation}
where $z(\boldsymbol\varphi)$ is a vector of \textit{primitive} (having coefficient equal to $1$) monomials that form a trigonometric basis for $\mathbb{T}^{d-1}$,~\cite{dumitrescu2007positive}. Such a matrix $G_R$ is said to be a \textit{Gram matrix representation} of $R(\boldsymbol{\varphi})$. Any matrix $G_R$ satisfying~\eqref{eq: GramMatrix} is positive semidefinite if and only if $R(\boldsymbol{\varphi})\ge 0$,~\cite{dumitrescu2007positive}. For all $\lvert\mathbf{k}\rvert\le \mathbf{n_r}$, any Gram matrix representation associated with $R(\boldsymbol{\varphi})$ satisfies \begin{equation}\label{eq:traceparametrization}
	r_\textbf{k}=\trace\left[\left(T_{k_{d-1}}^{n_r(d-1)+1}\otimes T_{k_{d-2}}^{n_r(d-2)+1}\otimes \cdots \otimes T_{k_1}^{n_r(1)+1}\right)^\dagger G_R\right]\vcentcolon=\ct{k}{n_r}{G_R},
\end{equation}
where $T_k^n$ is the $k^{th}$ elementary Toeplitz matrix of size $n$, meaning it is a $(0,1)-$matrix with ones only on the $k^{th}$ diagonal, that is, $T_{(i,j)}=1$ if and only if $j-i=k$. This is known as the trace parametrization property~\cite{dumitrescu2007positive} of trigonometric polynomials. 

Given trigonometric polynomials $R(\boldsymbol{\varphi})$ and $P(\boldsymbol{\varphi})$, the derivatives $D_cR(\boldsymbol{\varphi})\vcentcolon=\frac{\mathrm{d}R}{\mathrm{d}\varphi_c}(\boldsymbol{\varphi})$ for ${c=1,\ldots,d-1}$; the sum $(P+R)(\boldsymbol{\varphi})\vcentcolon=P(\boldsymbol{\varphi})+R(\boldsymbol{\varphi})$; and product $PR(\boldsymbol{\varphi})\vcentcolon=P(\boldsymbol{\varphi})R(\boldsymbol{\varphi})$ are trigonometric polynomials of degree $\mathbf{n_r}$, $\mathbf{n}_{sum}\le\max\{\mathbf{n_p},\mathbf{n_r}\}$ and $\mathbf{n_p+n_r}$, respectively. Trace parametrization allows us to obtain relations concerning their Gram matrix representations~\cite{preprint_paper} as\begin{eqnarray}\label{eq:deriv-Gram}
	\ct{k}{n_r}{G_{D_c R}}
	&=&\mathrm{i} \, k_c\,\ct{k}{n_r}{G_R},\\
	\label{eq:sum-Gram}\ct{k}{\max\{n_p,n_r\}}{G_{P+R}}&=&\ct{k}{n_p}{G_p}+\ct{k}{n_r}{G_R},\\
	\label{eq:product-Gram}
	\ct{k}{n_{p}+n_{r}}{G_{PR}}&=&
	\sum_{\lvert\mathbf{j}\rvert\le\mathbf{n_{p}+n_{r}}}\ct{k-j}{n_p}{G_P}\, \ct{j}{n_r}{G_R}. 
\end{eqnarray}

The system $\dot{\boldsymbol{\theta}}=\boldsymbol{F}(\boldsymbol{\theta})$ given in~\eqref{eq:system_conf} clearly has a trigonometric series expansion of degree $\boldsymbol{L}=L\times\mathbf{1}$ for each of its components $F_c\colon\mathbb{T}^d\to\mathbb{R}$. Consequently, the components of corresponding phase-difference system $\widetilde{F}_c:\mathbb{T}^{d-1}\to \mathbb{R}$ with variables $\varphi_c=\theta_c-\theta_d$ for $c=1,\ldots,d-1$ also have a trigonometric polynomial expansion of degree $L\cdot\mathbf{1}$,
\begin{equation}\label{eq:Vector_field_fourier_PD_conf}
	\widetilde{F}_c(\boldsymbol\varphi)=\sum_{\mathbf{k}=-\boldsymbol{L}}^{\boldsymbol{L}}\widetilde{f}_\mathbf{k}^{\,(c)}\, e^{\mathrm{i}\,\mathbf{k}\cdot\boldsymbol\varphi}\vcentcolon=\sum_{k_1=-L}^{L}\ldots \sum_{k_{d-1}=-L}^{L}\,\widetilde{f}_\mathbf{k}^{\,(c)}\, {\rm{e}}^{\mathrm{i}\, \mathbf{k}\cdot\boldsymbol{\varphi}}.
\end{equation}
We will obtain a trigonometric polynomial $V(\boldsymbol{\varphi})$, such that $\Grad V (\boldsymbol{\varphi})\cdot \widetilde{\boldsymbol{F}}(\boldsymbol{\varphi})$ is negative definite on closed invariant arcs, to establish local stability of ${\dot{\boldsymbol{\varphi}}=\widetilde{\boldsymbol{F}}(\boldsymbol{\varphi})}$ using Lemma~\ref{cor:Lasalles_cor_conf}. For the same, we need to characterize the positivity of the trigonometric polynomial $W(\boldsymbol{\varphi})$ on certain arcs. For $r\in\mathbb{N}$, the set $\overline{\arc_{d-1}(\pi/2r)}$ can be expressed as\begin{eqnarray}\label{eq:domain_r}
	\mathcal{D}_{r} &=&\left\{\boldsymbol{\varphi}\in\mathbb{T}^{d-1}\colon\, Q_r^{(p,q)}(\boldsymbol{\varphi})\vcentcolon=\cos\left((\varphi_p-\varphi_q)r\right)\ge 0,\ 1\le p,q\le d-1\right\}.
\end{eqnarray}
Note that each $Q_r^{(p,q)}(\boldsymbol{\varphi})$ has degree $\le \boldsymbol{r}\vcentcolon=r\times\mathbf{1}$, and satisfies \begin{equation}\label{eq:trace_parameter_Hpqr}
	\ct{m}{\boldsymbol{r}}{G_{Q_r^{(p,q)}}}=\begin{cases}
		\begin{aligned}
			&1/2, &\text{ if } \mathbf{m}=\pm\mathfrak{S}_r^{(p,q)}\\
			&0, &\text{ otherwise}
		\end{aligned}
	\end{cases},
\end{equation} where $\mathfrak{S}_r^{(p,q)}=(0,\ldots,0,\underset{p^{th}\text{ element}}{r},0,\ldots,0,\underset{q^{th}\text{ element}}{-r},0,\ldots,0)$. Then Putinar's Positivstellensatz~\cite{dumitrescu2007positive} leads to the following result.

\begin{lemma}[Putinar's Positivstellensatz~\cite{dumitrescu2007positive}]\label{lem:Putinar}
	Consider trigonometric polynomials $Q_r^{(p,q)}(\boldsymbol{\varphi})\vcentcolon=\cos\left((\varphi_p-\varphi_q)r\right)$ for $1\le p\ne q\le d-1$, and $\mathcal{D}_r$ be as defined in~\eqref{eq:domain_r}. Then, a trigonometric polynomial $W(\boldsymbol\varphi)$ is positive on $\mathcal{D}$ (that is, $W(\boldsymbol\varphi)>0$ for all $\boldsymbol\varphi\in\mathcal{D}$), then there exist sum of squares polynomials $S_{0}^w(\boldsymbol\varphi)$, $S_{(p,q)}^w(\boldsymbol\varphi)$ for $1\le p\ne q\le d-1$ such that \begin{equation*}
		W(\boldsymbol\varphi)=S_0^w(\boldsymbol\varphi)+\sum_{1\le p\ne q\le d-1} S_{(p,q)}^w(\boldsymbol\varphi)\, Q_r^{(p,q)}(\boldsymbol\varphi).
	\end{equation*}
\end{lemma}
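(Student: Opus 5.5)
The plan is to reduce the statement to the general trigonometric Putinar Positivstellensatz of \cite{dumitrescu2007positive}. That theorem holds for any finite family of trigonometric polynomials whose associated quadratic module is Archimedean. Most of the work is therefore to check that the family $\{Q_r^{(p,q)}\}$ fits its hypotheses, and to check that the set it cuts out is exactly $\mathcal{D}_r$.

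\textbf{Step 1 (description of the domain).} We first confirm that $\mathcal{D}_r$ is a compact semialgebraic subset of $\mathbb{T}^{d-1}$, defined by the finitely many constraints $Q_r^{(p,q)}\ge 0$. Each $Q_r^{(p,q)}$ is a real trigonometric polynomial. By \eqref{eq:trace_parameter_Hpqr} its only nonzero coefficients are $1/2$ at $\pm\mathfrak{S}_r^{(p,q)}$, so it is a legitimate constraint polynomial of degree at most $\boldsymbol{r}$. The pairs with $p=q$ give the constant $1$ and may be dropped, which is why the statement only lists $p\ne q$. Compactness is automatic, since $\mathcal{D}_r$ is a closed subset of the compact torus.

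\textbf{Step 2 (Archimedean property).} In the polynomial (real-variable) Putinar theorem, the key hypothesis is that the quadratic module $M=\{S_0+\sum S_{(p,q)}Q_r^{(p,q)}\}$ is Archimedean, i.e.\ that $N-\|x\|^2\in M$ for some $N$. On the torus this is immediate. Write the trigonometric polynomials as polynomials in $x_c=\cos\varphi_c$ and $y_c=\sin\varphi_c$ subject to the ideal generated by $x_c^2+y_c^2-1$. Then $(d-1)-\sum_c(x_c^2+y_c^2)=0$, which is trivially an SOS modulo that ideal. Hence every function $N-\|\cdot\|^2$ lies in $M$, and the module is Archimedean regardless of the $Q$'s. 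This is exactly the setting of the trigonometric Positivstellensatz in \cite{dumitrescu2007positive} (Schmüdgen/Putinar on the compact variety $\mathbb{T}^{d-1}$).

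\textbf{Step 3 (conclusion).} Suppose $W>0$ on $\mathcal{D}_r$. Applying Putinar's theorem in the variables $(x,y)$ restricted to the real variety $\{x_c^2+y_c^2=1\}$ produces polynomial SOS multipliers. These translate back, under $x_c=\cos\varphi_c$ and $y_c=\sin\varphi_c$, into trigonometric SOS polynomials $S_0^w$ and $S_{(p,q)}^w$ satisfying the claimed identity. The identity holds on the variety, hence as trigonometric polynomials.

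\textbf{Main obstacle.} The main obstacle is the bookkeeping in this translation. One must ensure that squares of real polynomials in $(x,y)$ correspond to squares $\lvert h(\boldsymbol\varphi)\rvert^2$ of complex trigonometric polynomials, so that the multipliers admit PSD Gram matrices as in \eqref{eq: GramMatrix}. Conversely, the strict positivity must be used, since the degree of the multipliers is not bounded a priori. I would handle this by citing the trigonometric version directly rather than redoing it. The underlying fact is that $\bigl(\mathrm{Re}\,h\bigr)^2+\bigl(\mathrm{Im}\,h\bigr)^2=\lvert h\rvert^2$, and conversely that real squares are Hermitian squares.
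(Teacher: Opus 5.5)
Your proposal is correct and follows the paper's route: the paper gives no independent argument and simply invokes the trigonometric Positivstellensatz of \cite{dumitrescu2007positive}, and you likewise end by citing that theorem. Your Steps 2--3 only make explicit why the citation applies here: the torus constraints $x_c^2+y_c^2=1$ make the quadratic module Archimedean automatically, so the classical Putinar theorem yields real SOS multipliers. These become trigonometric (Hermitian) SOS after the substitution $x_c=\cos\varphi_c$, $y_c=\sin\varphi_c$.
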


We are now ready to obtain the main result of this paper.

\begin{theorem}[Local stability on arcs]\label{thm:local_stab_arcs_main}
	Given $r\ge 1$, $\mathbf{n_v}\ge\boldsymbol{r}-\boldsymbol{L}$, and phase-difference system $\dot{\boldsymbol{\varphi}}=\widetilde{\boldsymbol{F}}(\boldsymbol{\varphi})$ corresponding to system~\eqref{eq:system_conf} having trigonometric polynomial expansion~\eqref{eq:Vector_field_fourier_PD_conf}, if there exists a Hermitian matrix $G_V$ of size $\prod_{c=1}^{d-1}\left(n_v{(c)}+1\right)$; a zero-sum Hermitian matrix $G_{S_0^w}\ge 0$ of size $\prod_{c=1}^{d-1}\left(n_v{(c)}+L+1\right)$; and zero-sum Hermitian matrices $G_{S_{(p,q)}^w}\ge 0,\ 1\le p\ne q\le d-1$, of size $\prod_{c=1}^{d-1}\left(n_v(c)-r+L+1\right)$, such that at least one $G_{S_{\ell}^w}$ has nullity $1$, and \begin{eqnarray}\label{eq:main_sdp_problem}
		\ct{k}{n_v+\textit{\textbf{L}}}{G_{S_0^w}}+\frac{1}{2}\sum_{\substack{p,q\in\{1,\ldots,d-1\}\\p\ne q}}\sum_{\boldsymbol{j}=\boldsymbol{k}\pm \mathfrak{S}_r^{(p,q)}}\ct{j}{n_v+\textit{\textbf{L}}-\textit{\textbf{r}}}{G_{S_{(p,q)}^w}}=-\sum_{c=1}^{d-1}\sum_{\lvert\boldsymbol{j}\rvert\le\boldsymbol{n_v}+\textit{\textbf{L}}}\,\mathrm{i}\,j_c\, \ct{j}{n_v}{G_V}\, \widetilde{f}_{\boldsymbol{k-j}}^{\,(c)},
	\end{eqnarray}
	for all $\lvert\mathbf{k}\rvert\le \boldsymbol{n_v}+\boldsymbol{L}$, then the system~\eqref{eq:system_conf} exhibits local phase synchronization in $\arc_d\left(\min\left\{\frac{\pi}{2r},\frac{2}{L}\left(\frac{\pi}{2}-\max_{l}\lvert\beta_l\rvert\right)\right\}\right)$.
\end{theorem}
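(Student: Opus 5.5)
The plan is to read the SDP constraints~\eqref{eq:main_sdp_problem} as a coefficient-wise identity between trigonometric polynomials. This identity produces a Lyapunov-like function $V$. We then invoke Lemma~\ref{cor:Lasalles_cor_conf} on a compact forward-invariant set supplied by Corollary~\ref{cor:invariant_set_PD}.

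\textbf{Step 1: reading off the polynomials.} Let $V(\boldsymbol{\varphi})=z(\boldsymbol{\varphi})^\dagger G_V z(\boldsymbol{\varphi})$, so its coefficients are $v_{\mathbf{j}}=\ct{j}{n_v}{G_V}$. Similarly define $S_0^w$ and $S_{(p,q)}^w$ from their Gram matrices. These are sums of squares, since their Gram matrices are positive semidefinite, hence nonnegative on $\mathbb{T}^{d-1}$.

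By~\eqref{eq:deriv-Gram}, the coefficients of $D_cV$ are $\mathrm{i}\,j_c v_{\mathbf{j}}$. By the product rule~\eqref{eq:product-Gram} applied to $D_cV\cdot\widetilde{F}_c$ with~\eqref{eq:Vector_field_fourier_PD_conf}, and the sum rule~\eqref{eq:sum-Gram}, the right-hand side of~\eqref{eq:main_sdp_problem} is the $\mathbf{k}$-th coefficient of $-\Grad V\cdot\widetilde{\boldsymbol F}$. Likewise, by~\eqref{eq:product-Gram} and~\eqref{eq:trace_parameter_Hpqr}, the left-hand side is the $\mathbf{k}$-th coefficient of
\[
W \vcentcolon= S_0^w+\sum_{p\ne q}S_{(p,q)}^wQ_r^{(p,q)} .
\]
The degree bookkeeping works because $\mathbf{n_v}\ge\boldsymbol r-\boldsymbol L$ makes the sizes consistent. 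Equality of all coefficients with $\lvert\mathbf{k}\rvert\le\mathbf{n_v}+\boldsymbol L$, which covers every coefficient, gives $-\Grad V\cdot\widetilde{\boldsymbol F}=W$ identically.

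\textbf{Step 2: positive definiteness of $W$ on $\mathcal D_r$.} On $\mathcal D_r$ each $Q_r^{(p,q)}\ge 0$, so $W\ge 0$ there. Since every Gram matrix is zero-sum, evaluating at $\boldsymbol\varphi=\boldsymbol 0$, where $z(\boldsymbol 0)=\mathbf 1$, gives $S_\ell^w(\boldsymbol 0)=\mathbf 1^\dagger G_{S_\ell^w}\mathbf 1=0$, hence $W(\boldsymbol 0)=0$.

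For strict positivity elsewhere, use the nullity-one hypothesis. For a PSD $G$, $z^\dagger Gz=0$ if and only if $z\in\ker G$. When $G_{S_\ell^w}\ge 0$ is zero-sum, $\mathbf 1^\dagger G\mathbf 1=0$ forces $\mathbf 1\in\ker G$. If the nullity is $1$, then $\ker G=\mathrm{span}\{\mathbf 1\}$, and $z(\boldsymbol\varphi)\in\mathrm{span}\{\mathbf 1\}$ forces every monomial to equal $1$. Provided the degree is at least $1$ in each variable, this forces $\boldsymbol\varphi=\boldsymbol 0$.

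For $\ell=0$ this gives $S_0^w>0$ off the origin, hence $W>0$ on $\mathcal D_r\setminus\{\boldsymbol 0\}$. If the nullity-one matrix is instead some $G_{S_{(p,q)}^w}$, we also need $Q_r^{(p,q)}>0$. This is where we restrict to the smaller arc of the conclusion: strictly inside length $\pi/2r$, each $Q$ is strictly positive. I expect the main obstacle to be this strictness bookkeeping, both for the boundary of $\mathcal D_r$ and for variables in which the Gram basis degree might be $0$. The plan is to handle it by working on the closed arc of a length $a$ strictly less than the minimum.

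\textbf{Step 3: the invariant set and conclusion.} Take $\boldsymbol\theta(0)\in\arc_d(a^*)$, where $a^*$ is the minimum in the statement. Choose $a<a^*$ with $\boldsymbol\theta(0)\in\overline{\arc_d(a)}$; this is possible because the arc is open. By Corollary~\ref{cor:invariant_set_PD}, $\mathfrak K_{inv}=(\overline{\arc_d(a)}/\sim)$ is compact, forward-invariant, and contains $\boldsymbol 0$. It is also contained in $\overline{\arc_{d-1}(a)}\subset\mathcal D_r$, with every $Q_r^{(p,q)}>0$ there, since $a<\pi/2r$.

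By Step 2, $-\Grad V\cdot\widetilde{\boldsymbol F}$ is positive definite on $\mathfrak K_{inv}$, so Lemma~\ref{cor:Lasalles_cor_conf} gives $\boldsymbol\varphi(t)\to\boldsymbol 0$. Through Remark~\ref{rem:dimension reduction}, this means $\theta_c-\theta_d\to 0$ for all $c$, which is local phase synchronization of~\eqref{eq:system_conf} in $\arc_d(a^*)$.
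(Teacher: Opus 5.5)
Your argument follows the paper's proof step for step. You use coefficient matching through \eqref{eq:traceparametrization}--\eqref{eq:product-Gram} and \eqref{eq:trace_parameter_Hpqr} to identify $W=-\Grad V\cdot\widetilde{\boldsymbol F}$, zero-sum plus nullity one for definiteness, Corollary~\ref{cor:invariant_set_PD} for invariance, and Lemma~\ref{cor:Lasalles_cor_conf} for convergence. Working on $\overline{\arc_d(a)}$ with $a<a^\ast$ is a genuine improvement over the paper. The paper applies the lemma on $(\overline{\arc_d(\min\{\pi/2r,a_\ast\})}/\sim)$ and asserts $W>0$ on all of $\mathcal D_r\setminus\{\boldsymbol 0\}$. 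That assertion is unjustified at points where $Q_r^{(p_0,q_0)}=0$ when the nullity-one matrix is a multiplier, and such points lie in the paper's $\mathfrak K_{inv}$ whenever $\pi/2r\le a_\ast$. Your choice makes every $Q_r^{(p,q)}$ strictly positive on $\mathfrak K_{inv}$ and still gives exactly the open arc in the statement.

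The degree-zero caveat you raise, however, is not resolved by shrinking the arc. It is a real hole, and the paper's proof passes over it as well. Suppose the nullity-one matrix is a multiplier $G_{S_{(p,q)}^w}$ and $n_v(c)=r-L$ for some $c$. Then its Gram basis contains no monomial in $\varphi_c$, so $z(\boldsymbol\varphi)=\mathbf 1$, and hence $S_{(p,q)}^w(\boldsymbol\varphi)=0$, along the whole circle $\{\varphi_{c'}=0 \text{ for } c'\neq c\}$. A segment of that circle through the origin lies in every $\mathfrak K_{inv}$, however small $a$ is, and nothing forces $W>0$ there. In the extreme case $\mathbf{n_v}=\boldsymbol r-\boldsymbol L$ (e.g.\ $r=L$, $\mathbf{n_v}=\boldsymbol 0$, $d\ge 3$), the multiplier Gram matrices are $1\times 1$, and $[0]$ is zero-sum, positive semidefinite and of nullity one. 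So $G_V=0$, $G_{S_0^w}=0$, $G_{S_{(p,q)}^w}=[0]$ satisfy every hypothesis while $W\equiv 0$, and Lemma~\ref{cor:Lasalles_cor_conf} cannot be applied. To make your Step~2 valid you need an extra assumption. For example, require the nullity-one matrix to have degree at least $1$ in every variable. This is automatic for $G_{S_0^w}$, since $n_v(c)+L\ge 1$, and holds for the multipliers when $\mathbf{n_v}\ge\boldsymbol r-\boldsymbol L+\mathbf 1$. In the degenerate case the certificate carries no information, so the stated conclusion would have to come from elsewhere. One source is the computation in the proof of Theorem~\ref{thm:inv_arc}, which in fact gives $\dot{\mathrm D}<0$ whenever $\mathrm D>0$ (for $K>0$).
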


\begin{proof}
	Using~\eqref{eq:trace_parameter_Hpqr}, Eq.~\eqref{eq:main_sdp_problem} can be rewritten as
    \begin{eqnarray*}
		\nonumber\ct{k}{n_v+\textit{\textbf{L}}}{G_{S_0^w}}+\sum_{1,\le p\ne q \le d-1}\sum_{\lvert\boldsymbol{j}\rvert\le \boldsymbol{n_v}+\textit{\textbf{L}}}\ct{j}{n_v+\textit{\textbf{L}}-\textit{\textbf{r}}}{G_{S_{(p,q)}^w}}\,\ct{k-j}{\textit{\textbf{r}}}{G_{Q_\ell}}=-\hspace{-10pt}\sum_{\substack{\lvert\boldsymbol{j}\rvert\le\boldsymbol{n_v}+\textit{\textbf{L}}\\c=1,\ldots,d-1}}\hspace{-15pt}\,\mathrm{i}\,j_c\, \ct{j}{n_v}{G_V}\, \widetilde{f}_{\boldsymbol{k-j}}^{\,(c)}.
	\end{eqnarray*}
    Using equations~$\eqref{eq:traceparametrization}-~\eqref{eq:product-Gram}$ and linearity of trace, we can rewrite the equality for any $\lvert\mathbf{k}\rvert\le \boldsymbol{n_v}+\textit{\textbf{L}}$ as
    \begin{eqnarray*}
		\ct{k}{n_v+\textit{\textbf{L}}}{G_{S_0^w}}+\sum_{\substack{p,q=1\\p\ne q}}^{d-1}\ct{k}{n_v+\textit{\textbf{L}}}{G_{S_{(p,q)}^w\,Q_{(p,q)}}}&=&-\hspace{-10pt}\sum_{\substack{\lvert\boldsymbol{j}\rvert\le\boldsymbol{n_v}+\textit{\textbf{L}}\\c=1,\ldots,d-1}}\hspace{-15pt}\ct{j}{n_v}{G_{D_cV}}\,\ct{k-j}{\textit{\textbf{L}}}{G_{\widetilde{F}_c}}\\
		\iff\ct{k}{n_v+\textit{\textbf{L}}}{G_{S_0^w}+\sum_{\substack{p,q=1\\p\ne q}}^{d-1}G_{S_{(p,q)}^w\,Q_{(p,q)}}}&=&-\sum_{c=1}^{d-1}\ct{k}{n_v+\textit{\textbf{L}}}{ G_{D_cV\, \widetilde{F}_c}},\\
		\iff\ct{k}{n_v+\textit{\textbf{L}}}{G_{S_0^w+\sum_{1\le p\ne q\le d-1}S_{(p,q)}^w\,Q_{(p,q)}}}&=&\ct{k}{n_v+\textit{\textbf{L}}}{ G_{-\sum_{c=1}^{d-1}D_cV\, \widetilde{F}_c}},
	\end{eqnarray*}
	which means that the Gram representation matrices on the left and right correspond to the same trigonometric polynomial. Hence, \begin{eqnarray*}
		W(\boldsymbol{\varphi})&\vcentcolon=&S_0^w(\boldsymbol\varphi)+\sum_{1\le p\ne q\le d-1} S_{(p,q)}^w(\boldsymbol\varphi)\, Q_r^{(p,q)}(\boldsymbol\varphi)
        =
        -\Grad V(\boldsymbol{\varphi})\cdot \widetilde{\boldsymbol{F}}(\boldsymbol{\varphi}).
	\end{eqnarray*}
	By hypothesis $G_{S_{(p,q)}^w}\ge 0$, hence   $S_{(p,q)}^w(\boldsymbol{\varphi})\ge 0$. Also, $S_{(p,q)}^w(\boldsymbol{\varphi})=z(\boldsymbol{\varphi})^\dagger\, G_{S_{(p,q)}^w}\, z(\boldsymbol{\varphi})$ by~\eqref{eq: GramMatrix}. Since $z(\boldsymbol{0})=\boldsymbol{1}$, the zero-sum condition implies $S_{(p,q)}^w(\boldsymbol{0})=0$. Since $G_{S_{\ell}}$ has nullity $1$ for $\ell=0$ or $\ell=(p_0,q_0)$, we have $S_{\ell}^w(\boldsymbol{\varphi})>0$ for all $\boldsymbol{\varphi}\in \mathcal{D}\setminus\{\boldsymbol{0}\}$. Hence, existence of positive definite ${W(\boldsymbol{\varphi})=-\Grad V(\boldsymbol{\varphi})\cdot \widetilde{\boldsymbol{F}}(\boldsymbol{\varphi})}$ on $\mathcal{D}_r$ is established. Now, $D_r\supseteq(\overline{\arc_d(\pi/2r)}/\sim)$. Also, $(\overline{\arc_d(a_\ast)}/\sim)$ is forward-invariant for all $a_\ast< \frac{2}{L}\left(\frac{\pi}{2}-\max_{l}\lvert\beta_l\rvert\right)$ , by Corollary~\ref{cor:invariant_set_PD}. Thus, the hypothesis of Lemma~\ref{cor:Lasalles_cor_conf} is satisfied taking $\mathfrak{K}_{inv}=(\overline{\arc_d(\min\{\pi/2r,a_\ast\})}/\sim)$. Hence, every solution starting in $\mathfrak{K}_{inv}$ approaches the origin. This implies local phase synchronization of system~\eqref{eq:system_conf} in $\mathcal{I}=\overline{\arc_d(\min\{\pi/2r,a_\ast\})}$ for any $a_\ast< \frac{2}{L}\left(\frac{\pi}{2}-\max_{l}\lvert\beta_l\rvert\right)$. 
\end{proof}

\begin{remark}\label{rem:arsos_and_nullity}
    To check the feasibility of the SDP obtained in Theorem~\ref{thm:local_stab_arcs_main}, the authors developed a dedicated program based on SeDuMi in MATLAB. It is called \texttt{arcSOS-t} solver and is available on GitHub~\cite{Tripathi_Software_arcSOS-t-Solver_2025}. The nullity condition in Theorem~\ref{thm:local_stab_arcs_main} is nonconvex and can not be directly imposed in an SDP. The issue can, however, be bypassed by imposing conditions $G_{S_{\ell}^w}\, \boldsymbol{1}=0$ (implies $G_{S_{\ell}^w}$ is zero-sum) and $\lambda_{min}(G_{S_{\ell}^w}+\boldsymbol{1}\,\boldsymbol{1}^\top)\ge 0.001$. 
    
    Any positive semidefinite matrix $A$ satisfying condition $A\,\boldsymbol{1}=0$ increases in rank by $1$ on addition by $\boldsymbol{1}\,\boldsymbol{1}^\top$. This can be proved by fixing an orthogonal basis (containing $\boldsymbol{1}$) that diagonalizes $A$, and observing the images of the basis vectors under $A+\boldsymbol{1}\,\boldsymbol{1}^\top$. It follows that $A+\boldsymbol{1}\,\boldsymbol{1}^\top$ shares all eigenvalues with $A$, except the one corresponding to the eigenvector $\boldsymbol{1}$, which becomes positive on this perturbation. Thus, the matrix $G_{S_{\ell}^w}$ satisfying $G_{S_{\ell}^w}\,\boldsymbol{1}=0$ is nullity $1$ if and only if $G_{S_{\ell}^w}+\boldsymbol{1}\,\boldsymbol{1}^\top$ is full rank. Now the second condition imposes the nullity-one condition on $G_{S_{\ell}^w}$ (since $G_{S_{\ell}^w}+\boldsymbol{1}\,\boldsymbol{1}^\top$ is positive definite matrix and thus has full rank), and also bounds the positive eigenvalues of $G_{S_{\ell}^w}$ by $0.001$ from below. 
\end{remark}

\section{Examples}\label{sec:example}

We present two examples with three oscillators ($d=3$) and $L=1,2$ harmonics in $g$. Define ${g(x)=\sum_{l=1}^L\alpha_l\,\sin(l\,x+\beta_l)}$. Then the system~\eqref{eq:system_conf} is rewritten as \begin{equation*}
	\dot{\theta}_k=F_k(\boldsymbol{\theta})\vcentcolon=\omega+K \sum_{c=1}^3 g(\theta_c-\theta_k), \quad k\in\{1,2,3\}.   
\end{equation*}
Defining $\varphi_1=\theta_1-\theta_3$ and $\varphi_2=\theta_2-\theta_3$, we obtain the phase-difference system as\begin{eqnarray*}\label{eq:example_phase_dif}
	\begin{aligned}
		\dot{\varphi_1}&=&\widetilde{F}_1(\varphi_1,\varphi_2):=K\,(g(\varphi_2-\varphi_1)+g(-\varphi_1)-g(\varphi_1)-g(\varphi_2)),\\
		\dot\varphi_2&=&\widetilde{F}_2(\varphi_1,\varphi_2):=K\,(g(\varphi_1-\varphi_2)+g(-\varphi_2)-g(\varphi_1)-g(\varphi_2)),
	\end{aligned}
\end{eqnarray*}
whose stability is independent of $K$. Hence, we can substitute $K=1$. The coefficients for the phase-difference vector field are then given in Table~\ref{tab:PD_coefficients}. In the examples that follow, we will utilize our program \texttt{arcSOS-t}~\cite{Tripathi_Software_arcSOS-t-Solver_2025}, which was discussed in Remark~\ref{rem:arsos_and_nullity}, to check the feasibility of the SDP problem~\eqref{eq:main_sdp_problem}.

\begin{table}[h!]
	\setlength{\tabcolsep}{12pt}\centering
		\caption{Trigonometric polynomial coefficients for $\dot{\boldsymbol{\varphi}}=\widetilde{F}(\boldsymbol{\varphi})$ in Sect.~\ref{sec:example}}
		\label{tab:PD_coefficients}
        \begin{tabular}{ccc}
			\hline $\boldsymbol{k}$ & $\widetilde{f}_{\boldsymbol{k}}^{\,(1)}$ & $\widetilde{f}_{\boldsymbol{k}}^{\,(2)}$ \\[5pt]
			$(-l,l)\colon l=1,\ldots,L$ & $-\mathrm{i}\,\alpha_l\,{\rm{e}}^{\mathrm{i}\,\beta_l}/2$ & $\mathrm{i}\,\alpha_l\,{\rm{e}}^{-\mathrm{i}\,\beta_l}/2$\\
			$(l,0)\colon l=1,\ldots,L$ & $\mathrm{i}\,\alpha_l\,\cos(\beta_l)$ & $\mathrm{i}\,\alpha_l\,{\rm{e}}^{\mathrm{i}\,\beta_l}/2$\\
			$(0,l)\colon l=1,\ldots,L$ & $\mathrm{i}\,\alpha_l\,{\rm{e}}^{\mathrm{i}\,\beta_l}/2$ & $\mathrm{i}\,\alpha_l\,\cos(\beta_l)$\\
			otherwise & 0 & 0\\ \hline
		\end{tabular}
\end{table}

\begin{example}
	For $g(x)=4\,\sin(x+\pi/8)$. The SDP~\eqref{eq:main_sdp_problem} is feasible for $r=1$, $\mathbf{n_v}=(2,2)$, as per \textup{\texttt{arcSOS-t}}. The system exhibits local phase synchronization on $\arc_3(\pi/2)$ by Theorem~\ref{thm:local_stab_arcs_main}.
\end{example}

\begin{example}
	For $g(x)=4\,\sin(x+\pi/8)+2\,\sin(2x-\pi/4)$. The SDP~\eqref{eq:main_sdp_problem} is feasible for $r=1$, $\mathbf{n_v}=(2,2)$, as per \textup{\texttt{arcSOS-t}}, and local phase synchronization on $\arc_3(\pi/4)$ is established by Theorem~\ref{thm:local_stab_arcs_main}.
\end{example}

\section{Concluding Remarks}\label{sec:conc}

In this work, we obtained SDP certificates for establishing local phase synchronization on certain arcs for Kuramoto models whose coupling function can be expressed as a sum of a finite number of harmonics, each comprising sinusoidal terms. Using invariance of the system on arcs of certain lengths (Theorem~\ref{thm:inv_arc}), and the existence of a Lyapunov-like function on Putinar-inspired domains, we obtained Theorem~\ref{thm:local_stab_arcs_main} to establish local phase synchronization of the generalized Kuramoto model on arcs. The feasibility of the conditions obtained in the theorem is checked for some examples using our program~\texttt{arcSOS-t}~\cite{Tripathi_Software_arcSOS-t-Solver_2025}.

\section*{Acknowledgements}
This work was supported by the TUBITAK 1001 Research fund [grant number: 122E522].

%
%
\bibliographystyle{plain}
\bibliography{mybibfile}
\end{document}